\newcommand{\juan}[1]{\todo{J}\footnote{Juan: #1}\xspace}
\newcommand{\C}{\operatorname{\mathcal C}}
\theoremstyle{plain}
\newtheorem{theorem}{Theorem}[section]
\newtheorem{lemma}[theorem]{Lemma}
\theoremstyle{definition}
\theoremstyle{remark}
\begin{document}
\articletype{}
\title{Technical Note: Using Column Generation to Solve Extensions to the Markowitz Model}

\author{
\name{Lorenz M. Roebers, Aras Selvi\thanks{CONTACT Aras Selvi. Email: a.selvi@tilburguniversity.edu}, and Juan C. Vera}
\affil{Department of Econometrics and Operations Research, Tilburg University, 5037 AB Tilburg, The Netherlands}
}
\maketitle

\begin{abstract}
We introduce a solution scheme for portfolio optimization problems with cardinality constraints. Typical portfolio optimization problems are extensions of the classical Markowitz mean-variance portfolio optimization model. We solve such type of problems using a method similar to column generation. In this scheme, the original problem is restricted to a subset of the assets resulting in a master convex quadratic problem. Then the dual information of the master problem is used in a sub-problem to propose more assets to consider. We also consider other extensions to the Markowitz model to diversify the portfolio selection within the given intervals for active weights.
\end{abstract}

\begin{keywords}
portfolio optimization; Markowitz portfolio theory; column generation
\end{keywords}

\section{Introduction}
In portfolio optimization, an investor allocates funds among available assets. The objective is to select the best portfolio among a set of feasible portfolios, where the quality of a portfolio is measured in terms of different factors, classically expected return and risk. That is, the portfolio optimization problem is, naturally, a multi-objective problem where there is a trade-off between risk and return: typically a higher expected return implies facing higher risk and vice versa. This trade-off is set according to the investor's risk aversion. A standard model for portfolio optimization is the traditional Markowitz Mean-Variance Portfolio Problem \cite{markowitz}. In this model the trade-off between expected return and risk is represented by a weighted combination of return expectation and return variance. The Markowitz model is theoretically very strong, but has received a lot of criticism since the setting is not realistic, thus it is important to extend the simple Markowitz model with \emph{cardinality} and \emph{quantity constraints}  (see, e.g., \cite{cesarone2013new}).

In this paper we consider the traditional Markowitz Mean-Variance Portfolio Problem extended by some practical constraints including \emph{cardinality} and \emph{quantity constraints}. Existing methods for tackling such hard constraints are based on heuristics and evolutionary computing. 
We propose a novel methodology based on a column generation approach to quadratic optimization problems. To do this the constraints are divided into two groups. The first group, the `easy' ones, are a set of linear constraints. The second group of constraints are  the hard ones in terms of computational complexity. This group consist of \emph{tracking error constraints}, which are not convex, and the \emph{cardinality constraints}, which are of combinatorial nature.


The basic model includes constraints setting limits on the assets' weights based on multiple features such as active weights, market capital quantile, sector and deviation from a benchmark. We define the \textit{basic model} to be the Markowitz model extended by the first group of constraints. As all the constraints included are linear, the basic model is a (Convex) Quadratic Optimization Problem, and can be solved very efficiently to optimality. Indeed, our experimental analysis indicates that commercial solvers are easily able to solve this problem. Our methodology will be based on solving iteratively several instances of the basic model, and thus efficiency is critical. 

 
The second group consists of three type of constraints,
\emph{active share constraints} on the deviation from the benchmark, \emph{tracking error constraints} on the correlation between the returns of the selected portfolio and the benchmark, and \emph{cardinality constraints} on the number of active assets in the portfolio.
We tackle the \textit{tracking error constraints} by continuous adjustment of the risk-parameter.
To handle the \emph{cardinality constraints}, we propose a novel asset selection sub-problem based on the marginal effect of investing in those assets. Our method could be considered an extension of column generation to the quadratic setting.

In the literature significant attention is paid to real-life trading costs and monitoring availability; in particular, cardinality constraints are studied. \citeA{cesarone2013new} provide a discussion on the computational complexity of this type of problems. While the classical Markowitz model is a convex quadratic programming model, the cardinality constraint results in a significantly more complex NP-hard problem, which can be modeled via a mixed-integer quadratic program. An exact approach is provided by \citeA{bienstock1996computational}, with a branch-and-cut algorithm. Even though this work provides theoretically strong results, such an approach is not practical for real-life problems (see \cite{cesarone2013new}). Therefore, the algorithms considered in the literature are mainly based on local search and multi-objective evolutionary algorithms, which fail to guarantee global optimality. The effect of genetic algorithms, tabu search and simulated annealing on the cardinality constraint is seen in the detailed work of \shortcite{chang00} and references therein. Extensions to evolutionary algorithms, such as memetic algorithms are also studied for tackling the cardinality constraint \cite{streichert04}. More experiments and comparisons in evolutionary algorithms can be found in the work of \citeA{anagnostopoulos11}. 

The rest of this paper is structured as follows. In Section \ref{prelim} we provide the notation and formulate the problem mathematically. In Section \ref{sec:soltn} we present the methodology to solve the problem. In Section \ref{appl} we perform a numerical test of our methods. We describe the data we use and evaluate the performance of our solution by analyzing the results. The conclusion, alternative methods to adjust the solution and final remarks are in Section \ref{disc}.

\section{Preliminaries} \label{prelim}
The portfolio construction problem follows the model of \citeA{markowitz}, where a risk averse investor's goal
is to construct a portfolio maximizing expected return and minimizing risk. Risk aversion is quite a realistic assumption given large experimental evidence involving, for instance, lotteries \cite{holt02}.
The Markowitz model uses the volatility of the portfolio returns as measure of risk. Given $\Omega$, the variance-covariance matrix of the assets' return, and $\alpha$, the vector of assets' expected returns, we formulate the following optimization model,
\begin{equation} \label{model:Markovitz}
\begin{aligned}
&& \min_{w} & \ w^T \Omega w - \lambda \alpha^T w \\
&& \text{s.t.} & \  w_i \geq 0 \quad \forall i \quad (\textit{Non-negative weight allocation})\\
&& & \ \sum_{i \in \mathcal{I}} w_i = 1 \quad (\textit{Full portfolio invested})
\end{aligned}
\end{equation}
where the vector of decision variables $w$ represents the percentage of wealth invested in each asset, and $\mathcal{I}$ is the set of all possible assets.
The parameter $\lambda >0$ reflects the investor risk aversion, balancing the preference between risk and return.
The first constraint restricts the weight allocation to be non-negative (short positions allowed), and the second constraint ensures that the total allocation of asset weights sums up to 1 (simply indicating that all the funds have to be invested in our asset universe).

It has been assumed that the investor cares here only about the meand and variance of portfolio returns and we neglect all the higher moments like skewness (tail risk) or kurtosis (fatness of tails). Thus, up until this point we have used the same assumptions under which in a frictionless environment a mean-variance efficient frontier can be easily constructed (see e.g. \cite{cochrane09}) and the optimal portfolio chosen.


\subsection{The Basic Model}\label{sec:basic}
We add constraints to model (\ref{model:Markovitz}) to bring the analysis closer to practical implementation and more recent developments in the literature. We use $MCAPQ_k$ to denote the set of assets corresponding to companies in the $k$-th market capital quintile, where $k \in \mathcal{K} = \{ 1,\ldots,5\}$. The index $k=1$ stands for the largest quintile while $k=5$ stands for the smallest. The assets in the given problem are being distributed into sectors: each asset $i$ belongs to some sector $j$. We define the set $sector_j$ as the set of all assets $i$ that are in sector $j$, where the relevant companies belong to sector $j \in \mathcal{J}$.

All of the extensions are financially quite intuitive. We assume that a benchmark
 with weights $w^b$ is exogenously specified. We introduce the auxiliary decision variable
$d=w-w^b$, which measures the deviation from the given benchmark in terms of assets' weights.

The first difference with model \eqref{model:Markovitz} is that we use $d^T \Omega d + \lambda \alpha^T d$ as the objective function instead of $w^T \Omega w + \lambda \alpha^T w$. In our approach, instead of considering $\lambda$ as an exogenously fixed parameter, the value of $\lambda$ is dynamically adjusted in order to satisfy risk exposure constraints. More details are provided in section 3.

We add constraints on the deviation from the benchmark according to different features:
\begin{equation}
\begin{array}{rrclcl}
-0.05  \leq&&  d_i  &\leq  0.05 &  \forall i \in \mathcal{I} &(\textit{Deviation from Benchmark Weight})\\
-0.1 \leq && \displaystyle \sum_{i \in sector_j} d_i & \leq  0.1 & \forall j \in \mathcal{J} &  (\textit{Sector Active Weight})\\
-0.1 \leq&& \displaystyle \sum_{i \in MCAPQ_k} d_i & \leq  0.1 &  \forall k \in \mathcal{K} &(\textit{Market Capital Quintile Active Weight})\\
-0.1 \leq&& 	 \displaystyle \sum_{i \in \mathcal{I}} d_i\beta_i & \leq  0.1 &&(\textit{Beta Active Weight})
\end{array}
\end{equation}
The \textit{deviation from benchmark weight constraint} restricts the individual deviation from the benchmark weight from -5\% to +5\%.  The assets in the given problem are being distributed into sectors; each asset belongs to a sector. Thus, the \emph{sector active weight constraint} restricts the total summed deviation for each of the sectors to be less than 10\%. 

The market capitalization of an asset captures the asset’s capitalization size relative to the market.The \textit{market cap quintile constraint} ensures that the total summed deviation per quintile capitalization size does not exceed 10\%. The \textit{beta active weight constraint} ensures that the total sum of the product of beta, an exogenously specified measure of each of the asset's sensitivity to the whole market, and the deviation from the benchmark, is restricted to no more than 10\%. These constraints ensure that the constructed portfolio does not deviate much from the benchmark, as well as that the weight allocation of the assets are distributed across sectors, capitalization and betas. This is relevant from a risk management perspective and other policies which limit the portfolio exposure to idiosyncratic (e.g. firm specific) shocks.
\subsection{Computationally hard constraints}\label{hardcons}
The basic model  discussed in section \ref{sec:basic} could be solved efficiently. In this section, the constraints we introduce are the ones which make the problem computationally intractable.
\subsubsection{Active share}
The first non-convex constraint we introduce is the \emph{active share constraint}. The ‘active share’ concept originally proposed by \citeA{cremers09} and further analyzed by \citeA{cremers17} is a measure of the relative activeness of a portfolio. There are several reasons why an investor could be interested in the active share of a portfolio.  For example, in mutual funds, it is very important to know how active the fund manager is. After all, any active portfolio management services involve certain costs (management fees) which are considered to be a compensation for a portfolio manager's effort to generate positive abnormal returns (in finance jargon, positive ‘alphas’). This example with a mutual fund manager might sound restrictive, but the concept is quite broad and could be applied in any portfolio selection procedure.

 Since by construction the portfolio fully invested in a benchmark has a zero active share, our optimization problem simply tries to find an optimal deviation from a given benchmark. Moreover, a deviation from the benchmark can be observed by comparing the amounts invested in each asset. The \textit{active share constraint} is then given by:
\begin{equation}\label{c8}
 0.6 \leq 1 - \displaystyle \sum_{i\in \mathcal{I}} min(w_i,w^b_i)	\leq  1 \qquad (\textit{Active share})
\end{equation}
    The mathematical correctness of this constraint is also intuitive. Assume the portfolio selected in the current solution is exactly the same as the benchmark. Then, $\min(w_i,w_i^b) = w_i^b$ for every asset $i$, resulting in $1 - \sum_{i} \min(w_i,w_i^b) = 0$. This is clearly a violation. However, if there are more significant deviations in the selected portfolio, then the constraint will be satisfied. We reformulate the \emph{active share constraint} in terms of the the sum of the absolute value of the deviations using lemma \ref{lemma}.
\begin{lemma}\label{lemma}
Under the Full Portfolio constraint, for any $a$:
\begin{equation}
\sum_i \min(w_i,w^b_i)\leq a \text{ if and only if }  \sum_i{|d_i|}\geq 2(1-a)
\end{equation}
\end{lemma}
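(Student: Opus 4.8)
The plan is to collapse the entire statement into a single exact identity relating $\sum_i \min(w_i,w_i^b)$ to $\sum_i |d_i|$, after which both directions of the equivalence follow from one reversible rearrangement. The engine of the argument is the elementary pointwise identity
\begin{equation}
\min(x,y) = \tfrac{1}{2}\bigl(x + y - |x-y|\bigr),
\end{equation}
valid for all real $x,y$.

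First I would apply this identity with $x = w_i$ and $y = w_i^b$, noting that $|w_i - w_i^b| = |d_i|$ by the definition of the deviation variable, so that $\min(w_i,w_i^b) = \tfrac{1}{2}\bigl(w_i + w_i^b - |d_i|\bigr)$ for each asset $i$. Summing over $i \in \mathcal{I}$ gives
\begin{equation}
\sum_i \min(w_i,w_i^b) = \frac{1}{2}\left(\sum_i w_i + \sum_i w_i^b - \sum_i |d_i|\right).
\end{equation}
Next I would invoke the Full Portfolio constraint $\sum_i w_i = 1$ together with the fact that the exogenously specified benchmark is itself fully invested, i.e.\ $\sum_i w_i^b = 1$; substituting both turns the first two sums into the constant $1$ and yields the clean identity $\sum_i \min(w_i,w_i^b) = 1 - \tfrac{1}{2}\sum_i |d_i|$.

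From this identity the equivalence is immediate and symmetric: $\sum_i \min(w_i,w_i^b) \le a$ holds precisely when $1 - \tfrac{1}{2}\sum_i |d_i| \le a$, which rearranges to $\tfrac{1}{2}\sum_i |d_i| \ge 1-a$, i.e.\ $\sum_i |d_i| \ge 2(1-a)$, and every step is reversible, so both directions of the ``if and only if'' are obtained at once. There is no genuinely difficult step here; the only point demanding care is to make explicit the assumption $\sum_i w_i^b = 1$, since it is exactly the full investment of the benchmark that lets its weight term cancel and fixes the constant on the right-hand side. I would therefore state that assumption at the outset so the cancellation is transparent.
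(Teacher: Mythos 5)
Your proposal is correct and follows essentially the same route as the paper's proof: the pointwise identity $\min(w_i,w_i^b)=\tfrac12(w_i+w_i^b)-\tfrac12|d_i|$, summation, and the normalization of both $w$ and $w^b$ to obtain $\sum_i\min(w_i,w_i^b)=1-\tfrac12\sum_i|d_i|$. Your explicit flagging of the implicit assumption $\sum_i w_i^b=1$ is a small but worthwhile clarification over the paper's version, which uses it silently.
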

\begin{proof}
For all $i$ we have $\min(w_i,w^b_i) =\tfrac 12 (w_i+ w^b_i) - \tfrac 12 |w_i - w^b_i| = \tfrac 12 (w_i+ w^b_i) - \tfrac 12 |d_i|$. Thus $\sum_i \min(w_i,w^b_i) = \tfrac 12 \sum_i(w_i+ w^b_i) - \tfrac 12 \sum_i|d_i| = 1 - \tfrac 12 \sum_i|d_i|$ and the lemma follows.
\end{proof}
Thus the \emph{active share constraint} is equivalent in our case to 
\begin{equation}
\begin{array}{rrclcl}
&\sum_i|d_i| & \geq & 1.2 && (\textit{Total absolute deviation})\\
\end{array}
\end{equation}
\subsubsection{Tracking error}
The tracking error calculation (introduced by \citeA{roll92}, analyzed by \citeA{rudolf99}) can also be seen as a measure of how the portfolio returns are dispersed relative to the benchmark. 
\begin{equation}
\begin{array}{rrclcl}
& 0.05 \leq \sqrt{d^T\Omega d}	& \leq & 0.1 && (\textit{Tracking error constraint})\\
\end{array}
\end{equation}

The left part of the constraint is concave and the right part is convex. Since it is typically difficult to solve a minimization problem with concave constraints, we use an alternative method to satisfy the \textit{tracking error constraint}, namely by adjusting the risk-aversion parameter values $\lambda$. Notice that the tracking error constraint is a constraint on the risk measure, and thus it makes sense that the selection of $\lambda$ will reflect the tracking error. Therefore the algorithm dynamically adjusts the value of $\lambda$ to satisfy the \textit{tracking error constraint}. The details for this approach are given in section \ref{sec:soltn}. 

\subsubsection{Cardinality}
The \emph{cardinality constraint} closely relates to the idea that financial markets are not frictionless and there are substantial transaction costs and divisibility limitations. The re-balancing of a portfolio which contains hundreds of assets can be extremely costly and erode all the net returns. This motivates to allow just a limited number of positions in the portfolio to change. Another idea is that a portfolio with fewer assets is easy to oversee and analyze. Thus, the cardinality constraint brings practical advantages. In our model, the cardinality constraint ensures that the number of active assets (i.e., assets with non-zero weight) is at least 50 and at most 70. 
\begin{equation}
\begin{array}{rrclcl}
&50  \leq  card(w_i \neq 0) & \leq & 70 && (Cardinality)\\
\end{array}
\end{equation}
This constraint is a combinatorial constraint, and adding this type of combinatorial constraint to a quadratic optimization problem makes the problem computationally very hard to solve. Our main contribution is a new methodology to tackle this constraint. In a nutshell, in the proposed methodology, interactively we maintain a small set of candidate assets to ensure the cardinality constraint. We find the optimal portfolio restricted to this set of assets. Then the set of assets is dynamically updated by computing the marginal effect each asset has on the objective, and including as new candidates the most promising assets, while dropping those of smaller weight. The marginal effect plays a role similar to the reduced cost in column generation for linear programs. Details are discussed in section \ref{sec:soltn}.

\section{Solution Approach}\label{sec:soltn}
In our methodology we divide the problem into a master problem and a sub-problem. Given a set $\C$ of candidate assets, we call the master problem the basic model introduced in section \ref{sec:basic} restricted to $\C$.
\begin{align}\tag{Psim$_{\C}$}\label{model:Psim}
    \begin{array}{rllll}
        \rho^{\C} = &\underset{d,w}{\min}\ & d^T \Omega d - \lambda d^T \alpha & &\\[0.5cm]
&\textrm{s.t.} & w_i &\geq 0  & \forall i \in \mathcal{I} \label{ct:1} \\ [0.3 cm]
& & \underset{i \in \mathcal{I}}{\sum} w_i &=  1 &\label{ct:2}\\[0.4 cm]
& & \left|d_i\right|  &\leq  0.05  & \forall i \in \mathcal{I} \label{ct:3}\\[0.25 cm]
& &\left|\underset{{i \in sector_j}}{\sum}  d_i\right|  &\leq 0.1 & \forall j \in \mathcal{J} \label{ct:4}\\[0.5 cm]
& & \left|\underset{{i \in MCAPQ_k}}{\sum} d_i \right|  &\leq  0.1  & \forall k \in \mathcal{K} \label{ct:5}\\[0.6 cm]
&  & \left|\underset{i \in \mathcal{I}}{\sum} d_i\beta_i \right|  &\leq  0.1 & \label{ct:6} \\[0.35 cm]
    \end{array}
\end{align}
The master problem \ref{model:Psim} is a (convex) quadratic optimization problem. There are efficient ways to solve this type of problems; e.g., interior point methods \cite{wright1997primal}. 

Now we need to take care of the hard constraints introduced in section \ref{hardcons}. First we look at the \emph{active share constraint} or equivalently the \emph{total absolute deviation constraint}. To fulfill this constraint, we derive the following sufficient condition. 
\begin{equation}
    \underset{i\in I}{\sum}|d_i|=\underset{i\in \C}{\sum}|w_i-w_i^b|+\underset{i\notin \C}{\sum}w_i^b\geq\underset{i\in \C}{\sum}(w_i-w_i^b)+(1-\underset{i\in \C}{\sum}w_i^b)=2(1-\underset{i\in \C}{\sum}w_i^b)
\end{equation}
And thus $\underset{i\in \C}{\sum}w_i^b\leq 0.4$ implies the \emph{total absolute deviation constraint}. Therefore to satisfy the \emph{active share constraint}, we randomly deselect assets in $\C$ to reduce the sum of the benchmark weights such that $\underset{i\in \C}{\sum} w_i^b \leq 0.4$. \\

The \emph{tracking error constraint} is attained during each step by re-adjusting the value of $\lambda$ when the constraint is violated. Notice that  $d^T\Omega d$ is one of the objective terms; hence by adjusting $\lambda$ we can make \ref{model:Psim} change the priority: decreasing $\lambda$ gives more importance on minimizing the tracking error, ergo reducing $d^T\Omega d$, while increasing $\lambda$ gives more weight on maximizing the expected revenue by allowing a higher risk, hence increasing $d^T\Omega d$.\\

To satisfy the cardinality constraint, we construct the portfolio by solving problem \ref{model:Psim} to optimality on a set of 70 selected candidate assets  (all the other assets are considered to have weight 0 in the solution). In this way the cardinality constraint is satisfied. Iteratively the candidate set is modified by dropping assets of zero or small weight and replacing them by new candidate assets selected from the given universe of assets.

As we solve problem (\ref{model:Psim}) many times it is important that we use an efficient solver. We use Mosek version 8 \cite{mosek8} as solver under the Yalmip \cite{yalmip} environment release R20180926 in MATLAB 2017b software. This solver was the most efficient for our problem of the solvers we tested.

Now we explain how we initially pick the set of assets and how we update it in each iteration.
For each given review period, we initialize the algorithm by setting $\C$ as the set of assets that were selected in the portfolio of the previous period. In the case when a candidate asset from the previous balance portfolio is no longer in the market, it is replaced in $\C$ by a random asset.

Iteratively, after solving \ref{model:Psim} over a set $\C$ of candidate assets, we remove from $\C$ the assets with the lowest weight and all assets we do not invest in (investment$<10 ^{-5}$) and remove them from $\C$. Then to adjust the cardinality, i.e., the number of assets that we consider in the portfolio to $70$, we reselect assets not in $\C$ to be added to $\C$. We do this based on the marginal effect of investing in those assets. Take an asset $i \notin \C$ and let $\C' = \C \cup \{i\}$. We are interested on knowing wether $i$ will have a positive or 0 weight when solving (\ref{model:Psim}). To check this we fix $w_i = \epsilon$ in (\ref{model:Psim}$^{\epsilon}$). Notice that this new problem is very similar to (\ref{model:Psim}), and we call it (\ref{model:Psim}$^{\epsilon}$). 
 
The objective of  (\ref{model:Psim}$^{\epsilon}$) is the same objective of (\ref{model:Psim}) plus the term $(2d^T\Omega_{\cdot,i} -\lambda \alpha_i)\epsilon + \Omega_{i,i} \epsilon^2$.
 
The marginal direct effect on the objective from including asset $i$ can be split into three main parts. These three parts are: the marginal effect on the portfolio variance, the marginal effect on the mean portfolio return and the marginal effect on the turnover costs. The marginal direct effect on the objective function and the turnover from including asset $i$  is obtained from equation \eqref{eq:M1}.
\begin{equation}\label{eq:M1}
	m_i:=2d^T\Omega_{\cdot,i} -\lambda \alpha_i -2\lambda 10^{-3} \mathbf{I} (w^{Pre}_i\geq 10^{-5}) 
\end{equation} 
In the formulation of marginal costs, $w^{Pre}_i$ denotes the amount invested in asset $i$ in the previous portfolio. Correspondingly, $\mathbf{I} (w^{Pre}_i\geq 10^{-5})$ is an indicator function. This indicator function takes value $1$ if during the previous period a numerically significant amount has been invested in asset $i$; i.e., at least $0.001\%$ of the current budget, and $0$ otherwise. \\
 
The constraints of  (\ref{model:Psim}$^{\epsilon}$) are the same constraints of (\ref{model:Psim}) except for the constant term. 
The indirect effect can be obtained by using the dual (shadow) values. Shadow values are typically described for linear optimization problems, but under strong duality, they can be used for nonlinear models as well. Write the constraints of (\ref{model:Psim}) as $\bar{A}x\leq b$ and let $s$ denote the corresponding dual values. Then the indirect effect of investing in an asset $i$, as in the case of column generation, is given by equation \eqref{eq:M2}.
\begin{equation}\label{eq:M2}
	k_i:=s^T A_{\cdot,i} 
\end{equation}
Then the marginal effect $\delta_i$  of investing in asset $i$ is obtained by using equation \eqref{eq:M3}.
\begin{equation}\label{eq:M3}
	\delta_i=m_i-k_i 
\end{equation}

The assets with the most negative marginal effect are added to the list of candidate assets, such that the cardinality of the new set $\C$ is $70$. Then the convex problem (\ref{model:Psim}) is solved again. We choose allowing to invest in a fixed number of $70$ assets because investing in fewer than $70$ assets leads to a smaller feasible region for (\ref{model:Psim}) and therefore leads to a worse objective than allowing to invest in more assets. Algorithm \ref{alg:main} describes the process of optimizing the portfolio step-by-step. \\

\begin{algorithm}[H]\label{alg:main}
\KwData{\\
\begin{tabularx}{\textwidth}{@{}l<{:}@{\ }X@{}}
$\alpha$&Mean return parameter,\\
$\Omega$&Variance-Covariance matrix,\\
$w_{t-1}$&Previous period portfolio,\\
$w^b$&Benchmark portfolio,\\
$\bar{\lambda}=5$&Risk parameter\\
$\lambda=5$&Adjusted risk parameter\\
$w_{t}^{Pre}$ &Adjusted obtained weights in period $t-1$\\
$Removed$ $=$ $I(w_{t-1}=0)$&Assets in which we do not invest in period $t-1$\\
$nonzeros=\sum_i{\neg Removed_i}$&Number of assets in which we do not invest\\
$w_{t}=Psim:w(\neg Removed)$&Initial portfolio\\
$d_{t}=w_{t}-w^b$&Difference in weights compared to the benchmark \\
$\epsilon$ $=$ $10^{-3}\lambda$ & Turnover penalty\\
\end{tabularx}
}
\KwResult{$w_{t,best}$, $d_{t,best}$}

\While{time $\leq 2.9$ minutes}{
\While{$\sum_{Removed} |w^b(Removed)|< 0.6$}{
	Add arbitrary asset to $Removed$\\
}
Obtain $w_{t}$ and $d_{t}$ from $Psim$ with $w_t(Removed)=0$\\
\eIf{$\sqrt{d^T\Omega d}< 0.05$}{
Set $\lambda=0.9\lambda$\\
}{\eIf{$\sqrt{d^T\Omega d}> 0.1$}{
Set $\lambda=1.1\lambda$\\
}{\If{Psim($w_t$,$\bar{\lambda}$)+$\epsilon$ turnover($w_t$,$w^b$)$<$Psim($w_{t,best}$,$\bar{\lambda}$))+$\epsilon$ $turnover(w_{t,best},w_{t}^{Pre})$}{
Set $w_{t,best}=w_t$\\
}
set $oldRemoved=Removed$\\
set $Removed=oldRemoved \cup \{i:$ $argmin_i w_{t}(i)|i \in \neg Removed$\}\\
set $w_{t}(Removed)=0$\\
set $Removed=(w_t<10^{-5})$\\
set $nonzeros=\sum_i{\neg Removed}$\\
Select $70-nonzeros$ assets based on with best marginal effect at $w_{t}$ on the objective\\
$newRemoved=Removed$ excluding the selected assets\\
\If{$(oldRemoved=newRemoved)$}{
reselect $70-nonzeros$ random non-removed assets\\
$newRemoved=Removed$ excluding the reselected assets\\
}
$Removed=newRemoved$\\

}
}
}
\caption{CG for review period t}
\end{algorithm}

\section{Application}\label{appl}
Next, we apply the solution approach on the data set used and evaluate our solution with relevant performance metrics. 
\subsection{Data}
The problem is defined by Principal Inc, a global investment company. The company also provided the data to test the solution approach. We were provided with 10 years of time series data starting at 2007-01-03, which we refer to as S\&P 500 data, and estimators for the 4 weeks variance-covariance matrix of the return of those assets over the same time interval. The entries updated every 28 days (4 weeks). The elements of each entry for a date are: the identifier which is the unique identifying SEDOL code, the sector of the company which will be used to set the sector active weight in an interval, the beta value which reflects the volatility of an asset compared to the whole market, the alpha score which is the performance estimation shown as the expected return, name of the asset, benchmark weight which is the investment amount in the provided benchmark and the market cap quintile which reflects the size of the asset's company by the quantile in the market. \\

In the results sheet we are given the historical returns for each 4 week period. We assume that the returns on a date are the result of the investment done 4 weeks before. Thus, the return entry at the first date 2007-01-03 is actually the return coming from the investment made on 2006-12-06.
\subsection{Performance Measurements}
The basis of the cost calculations are the portfolio weights $w$ and the four-week returns of the given portfolio $r$. For the calculations of our performance measures we also include the turnover adjusted returns. The turnover is penalized by $0.5\%$ in our method to avoid high costs corresponding to changing portfolio. The turnover is calculated by the following two formulas.
\begin{equation}
w_{i,t}^{Pre}=\dfrac{w_{i,t-1}(1+ r_{i,t-1})}{\sum_i w_{i,t-1}(1+ r_{i,t-1})}
\end{equation}
\begin{equation}
turnover(w_{i,t},w_{i,t}^{Pre})=\sum_i{|w_{i,t}-w_{i,t}^{Pre}|}
\end{equation}
We evaluate our solution by comparing the following results with the benchmark results both including and excluding the turnover costs:
\begin{itemize}
\item \textbf{Cumulative Return} is the total return obtained. It is used to see how well the chosen portfolio scheme has performed at the end of the final period. 
\item \textbf{Annual Return} converts the cumulative period into an average annual return. The advantage of using Annual Returns is that this performance measure is independent of the measured time interval. 
\item \textbf{Annualized Excess Return} is defined as the difference between the annual return of the proposed portfolio and the annual return of the benchmark.
\item \textbf{Tracking Error} is the standard deviation of the difference between the returns of the chosen portfolio and the benchmark. We use \textbf{Annualized Tracking Error} by considering the number of re-balances in a year. The Tracking error can be used to analyze how closely the portfolio follows the benchmark.
\item \textbf{Sharpe Ratio} is a measure to show the return of the portfolio compared to the risk it carries \cite{sharpe1994sharpe}. It is computed by subtracting the best risk-free option from the final return and dividing it by the standard deviation of the observed returns. Since a risk-free option is not investing at all, we take this value as 1. This ratio helps us to understand how choosing riskier assets affect the extra return we have compared to the risk-free option. This ratio is helpful to see the trade-off between the return and the risk.
\item \textbf{Information Ratio} is a measure to show how good the portfolio returns compared to the benchmark given, with a tracking error normalization. According to \citeA{grinold2000active}: `` The information ratio measures achievement ex post (looking backward) and connotes opportunity ex ante (looking forward)." It is found by dividing the difference between the returns of portfolio and benchmark by the tracking error. Information Ratio is similar to Sharpe Ratio, however IR gives the risk and return by taking benchmark as the base case. Higher IR is given by higher difference between portfolio and benchmark, and a lower tracking error. So higher IR can be used to see ``how closely the benchmark is followed, with how much better return.''
\end{itemize}

\subsection{Results and Analysis}
We used a run-time limit of $170$ seconds for each review period for the algorithm. To check the performance of our method we compare our portfolio's performance on the Principal dataset to the benchmark. In this comparison, we do not consider turnover costs for the returns of the benchmark portfolio. We therefore split our result analysis into two parts. We first compare the results of our method excluding turnover costs. In the second part, we do include the turnover costs in our portfolio. 

It can be seen from our performance statistics in Table \ref{tab1} that the result of our method excluding turnover costs outperforms the benchmark. In each of the performance statistics, our model's results are better than the performance statistics of the benchmark.
\begin{table}[H]
\caption{Portfolio Performance Statistics Excluding Turnover costs.}
\centering
\begin{tabular}{|l|c|c|}
\hline
\textbf{2007-01-01 to 2016-12-31}&
\textbf{Portfolio}&
\textbf{Benchmark} \\
\hline
Cumulative Return&
{    275.65\%} & 
{239.10\%}\\
\hline
Annualized Return&
{14.15\%}& 
{12.99\%} \\
\hline
Annualized Excess Return&
{1.16\%}&
-- \\
\hline
Sharpe Ratio&
55.75&
40.81 \\
\hline
Information Ratio&
24.25&
-- \\
\hline
\end{tabular}
\label{tab1}
\end{table}
Our model handles turnover cost indirectly but the obtained portfolios still have a large turnover. This can also be seen in Figure \ref{fig3}, which plots the turnover costs of our portfolios over the review periods. These turnover costs are calculated as a $0.5\%$ cost per unit turnover. Figure \ref{fig3} indicates that our method does not focus enough on having a small turnover. 
\begin{figure}[H]
\begin{center}
\includegraphics[scale=0.9]{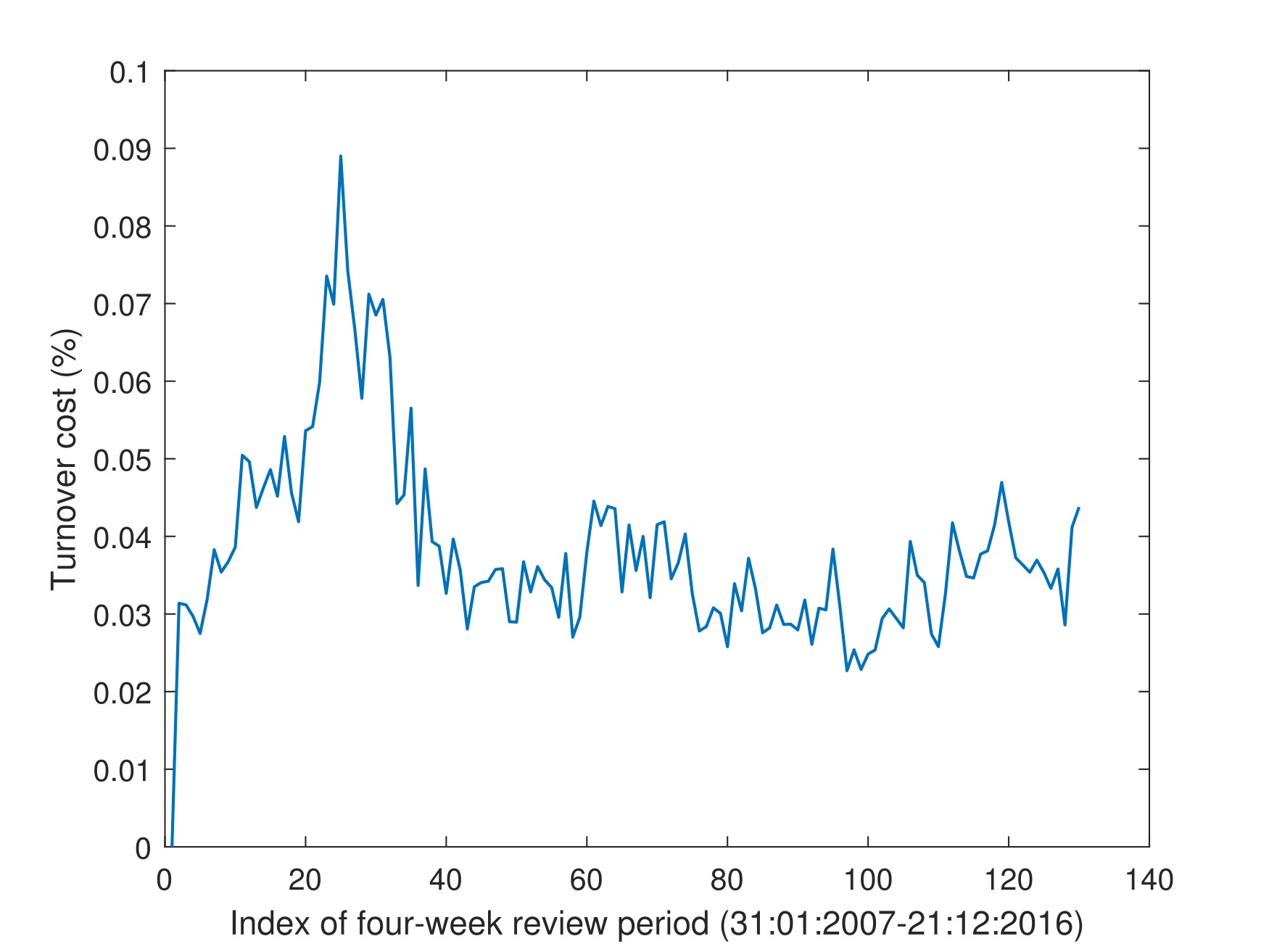}\\
\caption{Turnover costs: based on $0.5\%$ turnover costs per unit turnover}
\label{fig3}
\end{center}
\end{figure}
The relatively high turnover costs of our portfolios lead to our method performing slightly worse than the benchmark. These performance statistics can be seen in Table \ref{tab2}. These differences in performance suggest that our method adjusts the portfolio too much each period. That is why, for future development of this model, it is important to focus more on the minimization the turnover costs. In the section \textit{Discussion}, we propose a few adjustments to the method to achieve lower turnover.
\begin{table}[H]
\def\arraystretch{1.4}
\begin{center}
\caption{Portfolio Performance Statistics Including Turnover costs}
\begin{tabular}{|l|c|c|}
\hline
\textbf{2007-01-01 to 2016-12-31}&
\textbf{Portfolio}&
\textbf{Benchmark} \\
\hline
Cumulative Return&
{    207.18\%} & 
{239.10\%}\\
\hline
Annualized Return&
{11.88\%}& 
{12.99\%} \\
\hline
Annualized Excess Return&
{--1.11\%}&
-- \\
\hline
Annualized Tracking Error&
{5.54\%}&
-- \\
\hline
Sharpe Ratio&
41.95&
40.81 \\
\hline
Information Ratio&
--20.75&
-- \\
\hline
\end{tabular}
\label{tab2}
\end{center}
\end{table}
The difference in returns is due to the $0.5\%$ turnover cost per unit turnover. Figure 3 illustrates the turnover costs per four week review date beginning in $31^{th}$ January $2007$ to $21^{th}$ December $2016$. Figure \ref{fig1} shows that the resulting returns including turnover costs are on average not as high as the benchmark. This is in part compensated by a smaller risk, which is shown as a smaller spread of the box plot. Therefore our resulting portfolio can be seen as more robust. 
\begin{figure}[H]
\begin{center}
\includegraphics[scale=0.9]{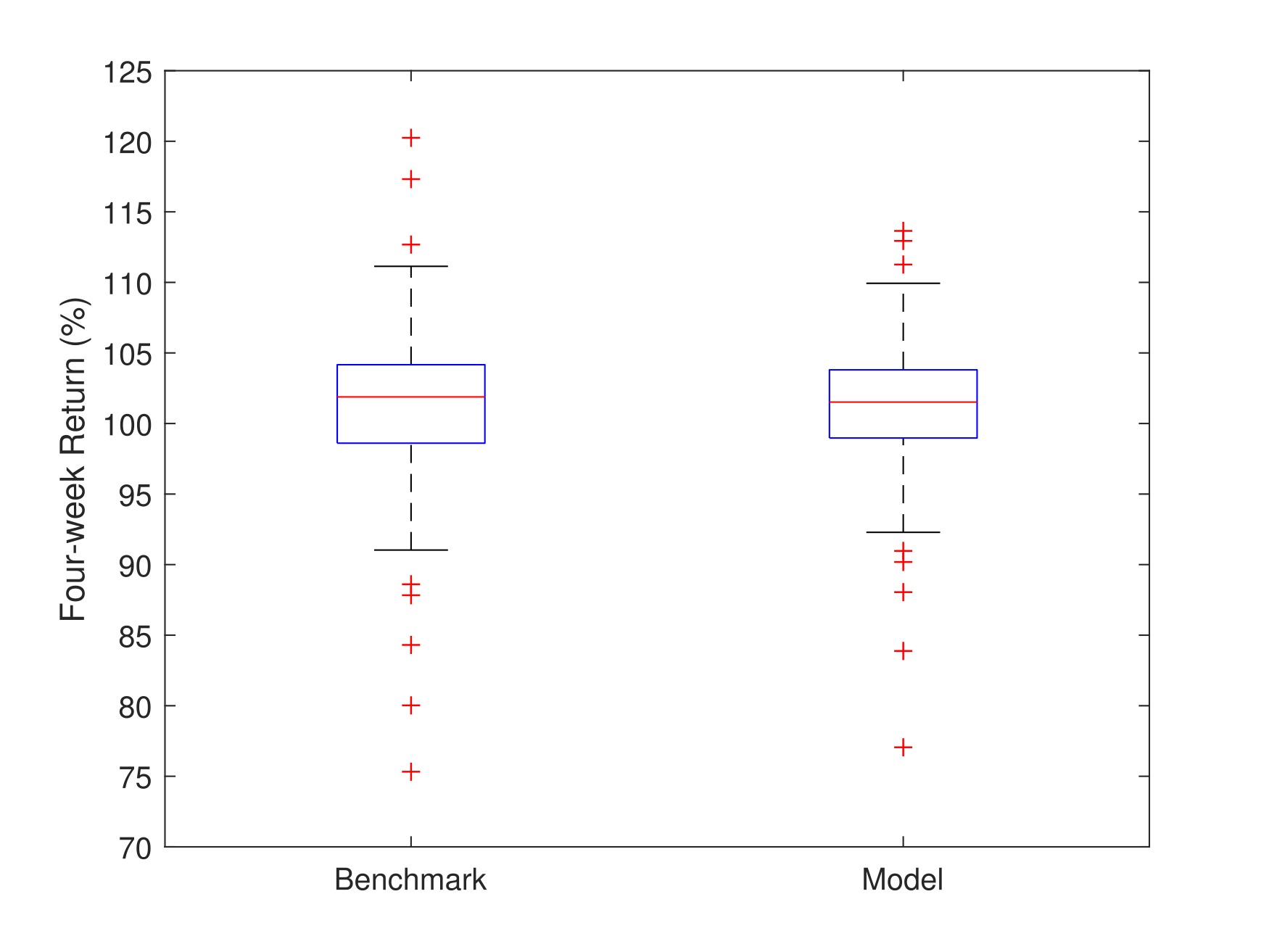}\\
\caption{Boxplot of the turnover adjusted 4-week returns}
\label{fig1}
\end{center}
\end{figure}

Next, we show the performance of our method over each review period. These results are shown in Figure \ref{fig2}, which shows that the model follows the benchmark quite closely and our portfolios even lead to smaller peaks. This can be interpreted as our method leads to more robust portfolios than the benchmark. 
\begin{figure}[H]
\begin{center}
\includegraphics[scale=0.9]{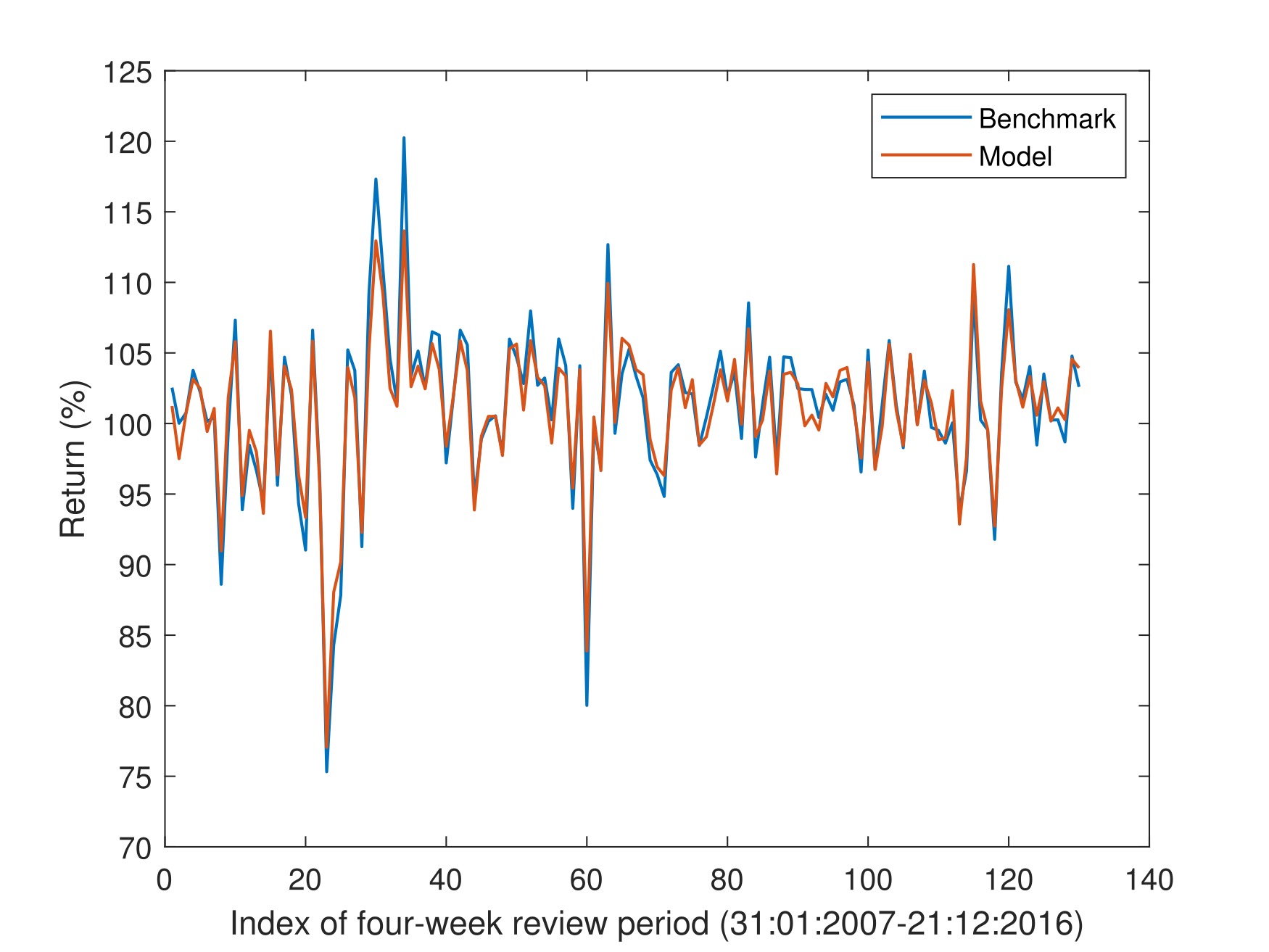}\\
\caption{Turnover adjusted 4-week returns per review period}
\label{fig2}
\end{center}
\end{figure}

\section{Discussion}\label{disc}
Our method shows promise for solving portfolio optimization problems with cardinality constraints. Table \ref{tab1} shows that the performance without the turnover costs is better than the benchmark, in terms of both risk and expected return. But the method could be improved in terms of turnover. Figure 3 and Table \ref{tab2} both show that the method has difficulties with the limitations of the turnover cost. Therefore, for future research we recommend to constrain the allowed turnover. Although the turnover adjusted performance in terms of cumulative return is slightly worse than the benchmark, this is partially due to the absence of the turnover costs for the benchmark. Overall, this method incorporates a comprehensive model that performs close to the benchmark keeping many practical issues in consideration.  
The method should work similarly on quadratic optimization problems (in particular in linear problems) with cardinality constraints.

Since in our model we do not allow infeasible solutions to exist, it is necessary to make our model more flexible or to allow (small) violations of the constraints, if no solution has been found. Another improvement that could be made is to consider multiple period portfolio selection to reduce the significant turnover costs. The turnover costs can also be reduced by considering the changes in the performance parameters $\alpha$, and $\Omega$, of the assets over time, which can be used to make more consistent portfolios or find trends in the performance of the assets.

One possible variation of the method is a column generation algorithm based on a random selection of the new assets; this would make the solution approach more diversified since in each iteration there would be more potential assets. Also, this may help to escape possible local optima. The idea is to select the assets which leave and enter the set of candidate assets  randomly. They could be chosen uniformly or with probabilities proportional to the marginal effects or Reduced Cost, where the Reduced Costs are determined by the shadow values. \\

In our algorithm, for each time period we have the initial portfolio referring back to the previous portfolio and the algorithm changes it by improvements. However, since we initiate the previous portfolio, the portfolio will end up close to the previous portfolio. With this way, we made sure the algorithm is efficient and can be terminated quickly. This allows the model to produce well-based results on even a much larger scale dataset (10-fold or 20-fold). On the other hand, one could allow to look at every possible asset to initialize the portfolio for the algorithm in each time period. This will increase the computational burden, but will allow to diversify the selections more.\\

\clearpage
\bibliographystyle{apacite}
\bibliography{interactapasample}
\end{document}